\newtheorem{thm}{Theorem}
\newtheorem{defn}{Definition}
\numberwithin{defn}{section}
\numberwithin{thm}{section}
\numberwithin{Lemma}{section}
\numberwithin{Corollary}{section}
\numberwithin{Example}{section}
\numberwithin{subsection}{section}
\numberwithin{Remark}{section}
\numberwithin{equation}{section}
\numberwithin{ppn}{section}
\begin{document}
\title[ Some Classes of third and Fourth-order iterative methods ... ]
{Some Classes of third and Fourth-order iterative methods for solving nonlinear equations} 
\author{J. P. Jaiswal }
\date{}
\maketitle


\textbf{Abstract.} 
The object of the present work is to present the new classes of third-order and fourth-order iterative methods for solving nonlinear equations. Our third-order method includes methods of Weerakoon \cite{Weerakoon}, Homeier  \cite{Homeier2}, Chun \cite{Chun} e.t.c. as particular cases. After that we make this third-order method to  fourth-order (optimal) by using a single weight function rather than using two different weight functions in  \cite{Soleymani}. Finally some examples are given to illustrate the performance of the our method by comparing with new existing third and fourth-order methods.\\

\textbf{Mathematics Subject Classification (2000).} 65H05.
\\

\textbf{Keywords and Phrases.} Nonlinear equation, simple root, order of convergence, optimal order, weight function.

\section{Introduction}
Solving nonlinear equations is one of the most important problems in numerical analysis. To solve nonlinear equations,
iterative methods such as Newton's method are usually used. Throughout this paper we consider iterative methods to find a simple root $\alpha$,  of a nonlinear equation $f(x)=0$, where $f:I\subset R \rightarrow R$ for an open interval $I$. It is known that the order of convergence of the Newton's method is two. To improve the order of convergence and efficiency index many modified third-order methods have been presented in the literature by using different techniques. Such as Weerakoon et. al. in \cite{Weerakoon} obtained a third-order method by approximating the integral in Newton's theorem by trapezoidal rule; Homeier in \cite{Homeier2} by using inverse function theorem and Chun et. al. in \cite{Chun} by using circle of curvature concept e.t.c.. Kung and Traub \cite{Kung} presented a hypothesis on the optimality of the iterative method by giving $2^{n-1}$ as the optimal order. This means that the Newton iteration by two evaluations per iterations is optimal with 1.414 as the efficiency index. By taking into account the optimality concept, many authors have tried to build iterative methods of optimal higher order of convergence.

The order of all these above discussed methods are three with three (one derivative and two function) function evaluations per full iteration. Clearly its efficiency index $(3^{1/3}\approx 1.442)$ is not high (optimal). In recent days authors are modifying  these type of non-optimal order methods to optimal order by using different techniques, such as in \cite{Behl} by using linear combination of two third-order methods,  in \cite{Torres} by using p0lynomoial approximation e.t.c.. Recently Soleymani et. al. \cite{Soleymani} have used two different weight functions in Weerakoon \cite{Weerakoon} and Homier \cite{Homeier2} methods to make it optimal.
 
This paper is organized as follows: in section 2, we describe a new class of third-order iterative method by using the concept of weight function which includes the methods of \cite{Weerakoon}, \cite{Homeier2} and  \cite{Chun} e.t.c..  In the next section we optimize the methods of previous section by again using the same weight function. Finally in the last section we give some numerical examples and the new methods are compared in the performance with some new existing third and fourth-order methods.

                             
\section{ Methods and convergence analysis}
Before constructing the methods, here  we state the following  definitions:
\begin{defn}
Let f(x) be a real valued function with a simple root $\alpha$ and let ${x_n}$ be a sequence of real numbers that converge towards $\alpha$. The order of convergence m is given by
\begin{equation}\label{eqn:21}
\lim_{n\rightarrow\infty}\frac{x_{n+1}-\alpha}{(x_n-\alpha)^m}=\zeta\neq0,  
\end{equation}     
\noindent
where $\zeta$ is the asymptotic error constant and $m \in R^+$.\\
\end{defn}

\begin{defn}
Let $n$ be the number of function evaluations of the new method. The efficiency of the new method is measured by the concept  of efficiency index \cite{Gautschi,Traub1} and defined as
\begin{equation}\label{eqn:23}
m^{1/n},
\end{equation}
where $m$ is the order of convergence of the new method.\\
\end{defn}

\subsection{ Third-order Methods }
In this section  we construct a class two-step third-order iterative method. Let us consider the following iterative formula

\begin{eqnarray}\label{eqn:21}
y_n&=&x_n-\frac{f(x_n)}{f'(x_n)},\nonumber\\
x_{n+1}&=&x_n-A \left(t\right)\frac{f(x_n)}{f'(x_n)},
\end{eqnarray}
where $t=\frac{f'(y_n)}{f'(x_n)}$. The following theorem indicates under what conditions on the weight functions  in $(\ref{eqn:21})$, the order of convergence is three:

\begin{thm}
Let the function f have sufficient number of continuous derivatives in a neighborhood of $\alpha$ which is a simple root of f, then the method $(\ref{eqn:21})$ has third-order convergence, when the weight function $A(t)$  satisfies the following conditions:
\begin{eqnarray}\label{eqn:22}
A(1)=1,\  A^{'}(1)=-\frac{1}{2},\  \left|A^{''}(1)\right|\leq +\infty,
\end{eqnarray}
\end{thm}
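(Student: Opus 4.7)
The plan is a standard Taylor-expansion argument centered at the simple root $\alpha$. I would set $e_n = x_n - \alpha$ and use the usual normalized derivatives $c_k = f^{(k)}(\alpha)/(k!\,f'(\alpha))$, so that
\begin{equation*}
f(x_n) = f'(\alpha)\bigl[e_n + c_2 e_n^2 + c_3 e_n^3 + O(e_n^4)\bigr], \qquad f'(x_n) = f'(\alpha)\bigl[1 + 2c_2 e_n + 3c_3 e_n^2 + O(e_n^3)\bigr].
\end{equation*}
Dividing one by the other (and expanding the reciprocal as a geometric series) gives the Newton correction $f(x_n)/f'(x_n) = e_n - c_2 e_n^2 + 2(c_2^2 - c_3)e_n^3 + O(e_n^4)$, so that the first step yields $y_n - \alpha = c_2 e_n^2 - 2(c_2^2 - c_3) e_n^3 + O(e_n^4)$. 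This is the usual starting point for analyzing any Newton-like predictor-corrector scheme.

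Next I would substitute $y_n - \alpha$ into the Taylor expansion of $f'$ to obtain $f'(y_n) = f'(\alpha)\bigl[1 + 2c_2^2 e_n^2 + O(e_n^3)\bigr]$, and then form the ratio
\begin{equation*}
t = \frac{f'(y_n)}{f'(x_n)} = 1 - 2c_2 e_n + (6c_2^2 - 3c_3) e_n^2 + O(e_n^3).
\end{equation*}
The important observation is that $t \to 1$ as $e_n \to 0$, which is exactly why the hypotheses on $A$ are imposed at $t=1$. Expanding $A$ in its own Taylor series about $1$ and substituting the conditions $A(1)=1$, $A'(1)=-\tfrac12$ yields
\begin{equation*}
A(t) = 1 + c_2 e_n + \Bigl(-3 c_2^2 + \tfrac{3}{2} c_3 + 2 c_2^2 A''(1)\Bigr) e_n^2 + O(e_n^3),
\end{equation*}
where the hypothesis $|A''(1)| < \infty$ is used to guarantee that this $e_n^2$ coefficient is a finite constant.

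The final step is to multiply this expansion by $f(x_n)/f'(x_n)$ and subtract from $e_n$ to get $e_{n+1}$. The algebra is routine but must be carried out carefully: the point is that the $e_n$ term gives $1 \cdot e_n$ which cancels $e_n$ in the outer $x_n-\alpha$, the $e_n^2$ contributions from the ``$1$'' part and from the ``$c_2 e_n$'' part cancel thanks to $A'(1) = -\tfrac12$, and so the residual error is
\begin{equation*}
e_{n+1} = \Bigl(2c_2^2 + \tfrac{1}{2} c_3 - 2 c_2^2 A''(1)\Bigr) e_n^3 + O(e_n^4),
\end{equation*}
which establishes third-order convergence via Definition 2.1. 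The only real obstacle is bookkeeping: keeping the expansions of $f(x_n)/f'(x_n)$, $t$, and $A(t)$ consistent to the correct order ($O(e_n^3)$ after the product) so that the vanishing of the $e_n$ and $e_n^2$ coefficients is transparent. I would organize the computation by writing each quantity as a series truncated at $O(e_n^3)$ before multiplying, to make the cancellations manifest.
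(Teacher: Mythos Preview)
Your proposal is correct and follows essentially the same route as the paper: Taylor-expand $f(x_n)$, $f'(x_n)$, and $f'(y_n)$ about $\alpha$, form the ratio $t=f'(y_n)/f'(x_n)=1-2c_2e_n+(6c_2^2-3c_3)e_n^2+\cdots$, expand $A(t)$ about $t=1$ using the hypotheses, multiply by the Newton correction, and read off the $e_n^3$ error term. Your final error constant $2c_2^2+\tfrac12 c_3-2c_2^2A''(1)$ agrees with the paper's $\tfrac12\bigl[c_3-4c_2^2(-1+A''(1))\bigr]$.
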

\begin{proof}
Let $e_n=x_n-\alpha$ be the error in the $n^{th}$ iterate and $c_h=\frac{f^{(h)}(\alpha)}{h!}$, $h=1,2,3 . . .$. We provide the Taylor series expansion of each term involved in $(\ref{eqn:21})$. By Taylor expansion around the simple root in the $n^{th}$ iteration, we have\\
\begin{equation}\label{eqn:23}
\begin{split}
f(x_n)=f'(\alpha)[e_n+c_2e_n^2+c_3e_n^3+c_4e_n^4+c_5e_n^5]         
\end{split}
\end{equation}
and, we have
\begin{equation}\label{eqn:24}
\begin{split}
f'(x_n)=f'(\alpha)[1+2c_2e_n+3c_3e_n^2+4c_4e_n^3+5c_5e_n^4].           
\end{split}
\end{equation}

Further more it can be easily find 
\begin{equation}\label{eqn:25}
\frac{f(x_n}{f'(x_n)}=e_n-c_2e_n^2+(2c_2^2-2c_3)e_n^3+O(e_n^4).
\end{equation}

By considering this relation, we obtain
\begin{equation}\label{eqn:26}
y_n=\alpha+c_2e_n^2+2(c_3-c_2^2)e_n^3+O(e_n^{4}).
\end{equation}

At this time, we should expand $f'(y_n)$ around the root by taking into consideration $(\ref{eqn:26})$. Accordingly, we have
\begin{equation}\label{eqn:27}
f'(y_n)=f'(\alpha)[1+2c_2^2e_n^2+(4c_2c_3-4c_2^3)e_n^3+O(e_n^{4})].
\end{equation}
Furthermore, we have
\begin{eqnarray}\label{eqn:28}
\frac{f'(y_n)}{f'(x_n)}=1-2c_2e_n+\left(6c_2^2 - 3c_3\right)e_n^2+ . . . +O(e_n^{4}).
\end{eqnarray}

By virtue of $(\ref{eqn:28})$ and $(\ref{eqn:22})$, we attain
\begin{eqnarray}\label{eqn:29}
A(t)\times \frac{f(x_n)}{f'(x_n)}=e_n-\frac{1}{2}\left[c_3-4c_2^2(-1+A''(1))\right]e_n^3 +O(e_n^{4}).
\end{eqnarray}
Finally using $(\ref{eqn:29})$ in $(\ref{eqn:21})$, we can have the following general equation, which has the third-order convergence 
\begin{eqnarray}\label{eqn:210}
&& e_{n+1}=x_{n+1}-\alpha \nonumber\\
&& =x_n-A(t)\times \frac{f(x_n)}{f'(x_n)}-\alpha \nonumber\\
&&=\frac{1}{2}\left[c_3-4c_2^2(-1+A''(1))\right]e_n^3 +O(e_n^{4}).
\end{eqnarray}
This proves the theorem.

\end{proof}

\textbf{Particular Cases:}\\

\textit{Case 1:} If we take $A(t)=\frac{2}{1+t}$ in $(\ref{eqn:21})$, then we get the  formula
\begin{eqnarray}\label{eqn:211}
y_n&=&x_n-\frac{f(x_n)}{f'(x_n)},\nonumber\\
x_{n+1}&=&x_n-\frac{2f(x_n)}{f'(x_n)+f'(y_n)},
\end{eqnarray}
which is same as established by Weerakoon et. al. in \cite{Weerakoon}.
\\

\textit{Case 2:}
If we take $A(t)=\frac{t+1}{2t}$ in $(\ref{eqn:21})$, then we get the  formula
\begin{eqnarray}\label{eqn:212}
y_n&=&x_n-\frac{f(x_n)}{f'(x_n)},\nonumber\\
x_{n+1}&=&x_n-\frac{f(x_n)}{2}\left(\frac{1}{f'(x_n)}+\frac{1}{f'(y_n)}\right),
\end{eqnarray}
which is same as established by Homeier in \cite{Homeier2}.
\\

\textit{Case 3:}
If we take $A(t)=\frac{3-t}{2}$ in $(\ref{eqn:21})$, then we get the  formula
\begin{eqnarray}\label{eqn:213}
y_n&=&x_n-\frac{f(x_n)}{f'(x_n)},\nonumber\\
x_{n+1}&=&x_n-\frac{1}{2}\left(3-\frac{f'(y_n)}{f'(x_n)}\right)\frac{f(x_n)}{f'(x_n)},
\end{eqnarray}
which is same as established by Chun et. al. in \cite{Chun}.
\\

\textit{Case 4:}
If we take $A(t)=\frac{3-t}{2}+\gamma (t-1)^2$ in $(\ref{eqn:21})$, where $\gamma$ is a real constant then we get the formula
\begin{eqnarray}\label{eqn:214}
y_n=x_n&-&\frac{f(x_n)}{f'(x_n)},\nonumber\\
x_{n+1}=x_n&-& \left[\frac{3}{2}-\frac{f'(y_n)}{2f'(x_n)}+\gamma \left(\frac{f'(y_n)}{f'(x_n)}-1\right)^2\right]\frac{f(x_n)}{f'(x_n)},
\end{eqnarray}
and its error equation is
\begin{eqnarray}\label{eqn:215}
e_{n+1}=\frac{1}{2}\left[(4-8\gamma)c_2^2+c_3\right]e_n^3 +O(e_n^{4}).
\end{eqnarray}
\textit{Remark 1.} Since $\gamma \in R$, by varying it one can get infinite number of third-order methods.


\subsection{ Fourth-order Methods }

The convergence order of the previous section methods are three with three (one derivative and two function) function evaluations per full iteration. Clearly its efficiency index $(3^{1/3}\approx 1.442)$ is not high (optimal). We now make use of one more same weight function to build our optimal class based on $(\ref{eqn:21})$ by a simple change in its first step.  Thus we consider
\begin{eqnarray}\label{eqn:31}
y_n&=&x_n-a.\frac{f(x_n)}{f'(x_n)},\nonumber\\
x_{n+1}&=&x_n-\{P(t)\times Q(t)\}\frac{f(x_n)}{f'(x_n)},
\end{eqnarray}
where  $P(t)$ and $Q(t)$ is a real-valued weight function with  $t=\frac{f'(y_n)}{f'(x_n)}$ and $a$ is a real constant. The weight function should be chosen such that order of convergence arrives at optimal level  four without using more function evaluations. The following theorem indicates under what conditions on the weight functions and constant $a$ in $(\ref{eqn:31})$, the order of convergence will arrive at the optimal level four:

\begin{thm}
Let the function f have sufficient number of continuous derivatives in a neighborhood of $\alpha$ which is a simple root of f, then the method $(\ref{eqn:31})$ has fourth-order convergence, when $a=2/3$ and the weight function $P(t)$  and and $Q(t)$ satisfy the following conditions

\begin{eqnarray}\label{eqn:31a}
&& P(1)= 1,\  P'(1) =-\frac{1}{2}, \ \left|P^{(3)}(1)\right|\leq +\infty \nonumber \\
&& Q(1) = 1,\ Q'(1)= -\frac{1}{4}, \  Q''(1) = 2 - P''(1),\  \left|Q^{(3)}(1)\right|\leq +\infty.\nonumber \\
&&
\end{eqnarray}
\end{thm}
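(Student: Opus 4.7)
The plan is to mimic the Taylor-expansion bookkeeping used in the proof of the third-order theorem, but pushed one power of $e_n$ further and with the scaled first step $y_n = x_n - (2/3)\,f(x_n)/f'(x_n)$. Writing $e_n = x_n - \alpha$ and $c_h = f^{(h)}(\alpha)/(h!\,f'(\alpha))$, I would reuse equations $(\ref{eqn:23})$--$(\ref{eqn:25})$ verbatim for $f(x_n)$, $f'(x_n)$ and $f(x_n)/f'(x_n)$, so that the only truly new computation is the expansion of $y_n - \alpha$ with the factor $a = 2/3$ in front. This gives
\begin{equation*}
y_n - \alpha = \tfrac{1}{3}e_n + \tfrac{2}{3}c_2 e_n^{2} + \tfrac{4}{3}(c_3 - c_2^{2})e_n^{3} + O(e_n^{4}).
\end{equation*}

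Next I would Taylor-expand $f'(y_n)$ around $\alpha$ using this new $y_n - \alpha$, divide by the expansion of $f'(x_n)$, and obtain
\begin{equation*}
t - 1 \;=\; -\tfrac{4}{3}c_2\,e_n + \bigl(4c_2^{2} - \tfrac{8}{3}c_3\bigr)e_n^{2} + \tau_3 e_n^{3} + O(e_n^{4})
\end{equation*}
for some computable coefficient $\tau_3$ depending on $c_2,c_3,c_4$. Because $t-1 = O(e_n)$, I need $P(t)Q(t)$ only through $(t-1)^{3}$; the Leibniz rule gives $(PQ)(1)=1$, $(PQ)'(1) = P'(1)+Q'(1) = -3/4$, $(PQ)''(1) = P''(1) + 2P'(1)Q'(1) + Q''(1)$, and the boundedness hypotheses on $P^{(3)}(1)$, $Q^{(3)}(1)$ ensure the $(t-1)^{3}$ coefficient is finite. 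The key observation is that the prescription $Q''(1) = 2 - P''(1)$ is engineered precisely so that $P''(1)$ drops out and $(PQ)''(1) = 2 + 2(-1/2)(-1/4) = 9/4$.

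Substituting the series for $t-1$ into $P(t)Q(t) = 1 - \frac{3}{4}(t-1) + \frac{9}{8}(t-1)^{2} + \frac{(PQ)'''(1)}{6}(t-1)^{3} + O((t-1)^{4})$, multiplying by the series for $f(x_n)/f'(x_n)$, and collecting powers of $e_n$, I would verify in turn that the coefficient of $e_n$ equals $1$ (from $P(1)Q(1)=1$), the coefficient of $e_n^{2}$ vanishes (this is where $(PQ)'(1) = -3/4$ is forced by the choice $a=2/3$), and the coefficient of $e_n^{3}$ also vanishes (this is where the value $9/4$ for $(PQ)''(1)$, and hence the constraint $Q''(1) = 2 - P''(1)$, is forced). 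Plugging back into $e_{n+1} = e_n - P(t)Q(t)\,f(x_n)/f'(x_n)$ then yields $e_{n+1} = C\,e_n^{4} + O(e_n^{5})$ for an explicit constant $C$ depending on $c_2,c_3,c_4$ and the bounded third derivatives, completing the proof.

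The main obstacle is not conceptual but organizational: one must carry all expansions consistently to order $e_n^{4}$ (which requires $y_n-\alpha$ to order $e_n^{3}$, hence $f(x_n)/f'(x_n)$ to order $e_n^{4}$, and $t-1$ to order $e_n^{3}$) and verify that the three conditions on $P(1),Q(1)$, on $P'(1),Q'(1)$, and on $P''(1)+Q''(1)$ interlock with the single scalar choice $a = 2/3$ to kill precisely the first three error coefficients. The boundedness assumptions on $P^{(3)}(1)$ and $Q^{(3)}(1)$ are used only at the last step to guarantee that the asymptotic error constant is finite.
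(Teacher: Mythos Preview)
Your proposal is correct and follows essentially the same Taylor-expansion approach as the paper's proof: expand $y_n-\alpha$ with $a=2/3$, then $f'(y_n)$, then $t=f'(y_n)/f'(x_n)$, substitute into the weight, multiply by $f(x_n)/f'(x_n)$, and read off the error equation. Your added step of computing $(PQ)(1)=1$, $(PQ)'(1)=-3/4$, $(PQ)''(1)=9/4$ via the Leibniz rule makes the role of each hypothesis more transparent than the paper's presentation, but the underlying argument is the same (and your linear coefficient $-\tfrac{4}{3}c_2$ in $t-1$ is in fact the correct one).
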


\begin{proof}
Using $(\ref{eqn:23})$ and $(\ref{eqn:24})$ and $a=2/3$ in the first step of $(\ref{eqn:31})$, we have
\begin{eqnarray}\label{eqn:32}
y_n=\alpha+ \frac{e_n}{3}+\frac{2c_2e_n^2}{3}+\frac{4(c_3-c_2^2)e_n^3}{3} + . . . +O(e_n^5).
\end{eqnarray}
Now we should expand $f'(y_n)$ around the root by taking into consideration $(\ref{eqn:32})$.Thus, we have
\begin{equation}\label{eqn:33}
f'(y_n)=f'(\alpha)\left[1+\frac{2c_2e_n}{3}+\frac{(4c_2^2+c_3)e_n^2}{3}+ . . . +O(e_n^{5})\right].
\end{equation}
Furthermore, we have
\begin{eqnarray}\label{eqn:34}
\frac{f'(y_n)}{f'(x_n)}=1-\frac{2c_2}{3}e_n+\left(4c_2^2 - \frac{8c_3}{3}\right)e_n^2+ . . . +O(e_n^{5}).
\end{eqnarray}
By virtue of $(\ref{eqn:34})$ and $(\ref{eqn:31a})$, we attain
\begin{eqnarray}\label{eqn:35}
&&\{P(t)\times Q(t)\}\frac{f(x_n)}{f'(x_n)} \nonumber\\
&&=e_n-\frac{1}{81}\left[-81c_2c_3+9c_4+(309+24A''(1)+32A^{(3)}(1)+32B^{(3)}(1))c_2^3\right]e_n^4 \nonumber\\
&&+O(e_n^{5})
\end{eqnarray}
Finally using $(\ref{eqn:35})$ in $(\ref{eqn:31})$, we can have the following general equation, which reveals the fourth-order convergence 
\begin{eqnarray}\label{eqn:36}
 e_{n+1}&=&x_{n+1}-\alpha \nonumber\\
&=&x_n-\{P(t)\times Q(t)\}\frac{f(x_n)}{f'(x_n)}-\alpha \nonumber\\
&=&\frac{1}{81}\left[-81c_2c_3+9c_4+(309+24A''(1)+32A^{(3)}(1)+32B^{(3)}(1))c_2^3 \right]e_n^4 \nonumber \\
&& +O(e_n^{5}).
\end{eqnarray}
It confirms the result.
\end{proof}
\textit{Remark 2.} Since $\gamma \in R$, by varying it one can get infinite number of fourth-order methods.

It is obvious that our novel class of iterations require three function evaluations per iteration, i.e. two first derivative and one function evaluations. Thus our new method is optimal. Clearly its efficiency index is $4^{1/4}=1.5874$ (high). Now by choosing appropriate weight functions as presented in $(\ref{eqn:31})$, we can give optimal two-step fourth-order iterative methods, such as
\\\\\\\\

\textit{Method. 1}

\begin{eqnarray}\label{eqn:38}
y_n=x_n&-&\frac{2}{3}\frac{f(x_n)}{f'(x_n)},\nonumber\\
x_{n+1}=x_n&-&\left[2-\frac{7}{4}\frac{f'(y_n)}{f'(x_n)}+\frac{3}{4}\left(\frac{f'(y_n)}{f'(x_n)}\right)^2\right]\frac{2f(x_n)}{f'(x_n)+f'(y_n)},
\end{eqnarray}

where its error equation is
\begin{eqnarray}\label{eqn:39}
e_{n+1}=\frac{1}{9}\left[-9c_2c_3+c_4+33c_2^3\right]e_n^4 +O(e_n^{5}).
\end{eqnarray} 
 
 \textit{Method. 2}

\begin{eqnarray}\label{eqn:310}
y_n=x_n&-&\frac{2}{3}\frac{f(x_n)}{f'(x_n)},\nonumber\\
x_{n+1}=x_n&-&\left[\frac{7}{4}-\frac{5}{4}\frac{f'(y_n)}{f'(x_n)}+\frac{1}{2}\left(\frac{f'(y_n)}{f'(x_n)}\right)^2\right]\nonumber\\
&&\times \frac{f(x_n)}{2}\left(\frac{1}{f'(x_n)}+\frac{1}{f'(y_n)}\right),
\end{eqnarray}

where its error equation is
\begin{eqnarray}\label{eqn:311}
e_{n+1}=\frac{1}{9}\left[-c_2c_3+\frac{c_4}{9}+\frac{79}{27}c_2^3\right]e_n^4 +O(e_n^{5}).
\end{eqnarray} 

\textit{Method. 3}

\begin{eqnarray}\label{eqn:312}
y_n=x_n&-&\frac{2}{3}\frac{f(x_n)}{f'(x_n)},\nonumber\\
x_{n+1}=x_n&-&\left[\frac{9}{4}-\frac{9}{4}\frac{f'(y_n)}{f'(x_n)}+\left(\frac{f'(y_n)}{f'(x_n)}\right)^2\right]\nonumber\\
&& \times \left(\frac{3}{2}-\frac{f'(y_n)}{2f'(x_n)}\right)\frac{f(x_n)}{f'(x_n)},
\end{eqnarray}
which is the same as given in  \cite{Jaiswal}
and its error equation is
\begin{eqnarray}\label{eqn:313}
e_{n+1}=\frac{1}{9}\left[-c_2c_3+\frac{c_4}{9}+\frac{103}{27}c_2^3\right]e_n^4 +O(e_n^{5}).
\end{eqnarray}

\textit{Method 4:}
\begin{eqnarray}\label{eqn:314}
y_n=x_n&-&\frac{2}{3}\frac{f(x_n)}{f'(x_n)},\nonumber\\
x_{n+1}=x_n&-&\left[\left(\frac{9}{4}-\gamma\right)
+\left(2\gamma-\frac{9}{4}\right)\frac{f'(y_n)}{f'(x_n)}+(1-\gamma)\left(\frac{f'(y_n)}{f'(x_n)}\right)^2\right]\nonumber\\
&&\times \left[\frac{3}{2}-\frac{f'(y_n)}{2f'(x_n)}+\left(\frac{f'(y_n)}{f'(x_n)}-1\right)^2\right]\frac{f(x_n)}{f'(x_n)},
\end{eqnarray}
and its error equation is
\begin{eqnarray}\label{eqn:315}
e_{n+1}=\frac{1}{27}\left[-27c_2c_3+3c_4+(103+16 \gamma)c_2^3\right]e_n^4 +O(e_n^{5}).
\end{eqnarray}

\section{Numerical Testing}   

Here we consider, the following four test functions to illustrate the accuracy of new iterative methods. The root of each nonlinear test function is also listed. All the computations reported here we have done using Mathematica 8. Scientific computations in many branches of science and technology demand very high precision degree of numerical precision.  We consider the number of decimal places as follows: 200 digits floating point (SetAccuraccy=200) with  SetAccuraccy Command.  The test non-linear functions are listed in Table-1.

Here we comparer performance of our new method $(\ref{eqn:38})$ to the methods of Weerakoon  $(\ref{eqn:211})$, Homeier $(\ref{eqn:212})$, Chun $(\ref{eqn:213})$, method  (17) of \cite{Soleymani}, Khattri \cite{Khattri}. The results of comparison for the test function are provided in the table 2-5. 


\begin{table}[htb]
\caption{Functions and Roots.}
  \begin{tabular}{ll} \hline
$f(x)$                                           &$\alpha$  \\ \hline 
$f_1(x)=e^{-x}-1+\frac{x}{5}$                       &4.9651142317442763036...\\ 
$f_2(x)=\frac{x^3+2.87x^2-10.28}{4.62}-x$           &2.0021187789538272889...\\ 
$f_3(x)=\frac{x+Cos x\ Sin x }{\pi}-\frac{1}{4}$    &0.4158555967898679887... \\  
$f_4(x)=xe^{-x}-0.1$                                &0.1118325591589629648... \\                                                            
\hline
  \end{tabular}
  \label{tab:abbr}
\end{table}
                

\begin{table}[htb]
\caption{Errors Occurring in the estimates of the root of function $f_1(x)$ by the methods described below with initial guess $x_0=5$.}
  \begin{tabular}{llll} \hline
Methods                         &$\left|x_1-\alpha \right|$  &$\left|x_2-\alpha \right|$ & $\left|x_3-\alpha \right|$ \\ \hline 
Newton Method                   & 0.21464e-4 & 0.83264e-11 &0.12530e-23\\ 
Weerakoon   $(\ref{eqn:211})$   & 0.11208e-6 & 0.37810e-23 &0.14517e-72\\ 
Homeier     $(\ref{eqn:212})$   & 0.12544e-6 & 0.59456e-23 &0.63310e-72 \\  
Chun Method $(\ref{eqn:213})$   & 0.98734e-7 & 0.22705e-23 &0.27611e-73 \\ 
Method  (17) of \cite{Soleymani}   
            & 0.14780e-5        &0.47702e-23 & 0.51761e-93 \\ 
Method of \cite{Khattri}   
            & 0.47426e-9        &0.16796e-40 & 0.26418e-166 \\
Method      $(\ref{eqn:38})$    & 0.42743e-9 & 0.99425e-41 &0.29108e-167\\                                                                
\hline
  \end{tabular}
  \label{tab:abbr}
\end{table}

\newpage

\begin{table}[htb]
 \caption{Errors Occurring in the estimates of the root of function $f_2(x)$ by the methods described below with initial guess $x_0=2.5$.}
  \begin{tabular}{llll} \hline
Methods                         &$\left|x_1-\alpha \right|$  &$\left|x_2-\alpha \right|$ & $\left|x_3-\alpha \right|$ \\ \hline 
Newton Method                   & 0.85925e-1 & 0.32675e-2 &0.50032e-5\\ 
Weerakoon   $(\ref{eqn:211})$   & 0.18271e-1 & 0.14770e-5 &0.79610e-18\\ 
Homeier     $(\ref{eqn:212})$   & 0.49772e-2 &0.33027e-8 &0.95318e-27 \\ 
Chun Method $(\ref{eqn:213})$   & 0.27815e-1 & 0.95903e-5 &0.41254e-15 \\ 
Method (17) of \cite{Soleymani}   
                               & 0.26594e-1  &0.32982e-6  & 0.76311e-26 \\ 
Method of \cite{Khattri}   
                               & 0.14965e-1  &0.45484e-7 & 0.40826e-29 \\
Method      $(\ref{eqn:38})$    & 0.76770e-2 & 0.12105e-8 &0.76261e-36\\                                                 
\hline
  \end{tabular}
  \label{tab:abbr}
\end{table}

\newpage

\begin{table}[htb]
 \caption{Errors Occurring in the estimates of the root of function $f_3(x)$ by the methods described below with initial guess $x_0=0.4$.}
  \begin{tabular}{llll} \hline
Methods                         &$\left|x_1-\alpha \right|$  &$\left|x_2-\alpha \right|$ & $\left|x_3-\alpha \right|$ \\ \hline 
Newton Method                   & 0.10737e-3 & 0.50901e-8 &0.11442e-16\\ 
Weerakoon   $(\ref{eqn:211})$   & 0.20631e-6 & 0.53436e-21 &0.92858e-65\\ 
Homeier     $(\ref{eqn:212})$   & 0.52795e-6 & 0.19743e-19 &0.10325e-59 \\ 
Chun Method $(\ref{eqn:213})$   & 0.93064e-6 & 0.20624e-18 &0.22446e-56 \\   
Method (17) of \cite{Soleymani}   
            & 0.86290e-7        &0.78612e-28  & 0.54150e-112 \\  
Method of \cite{Khattri}   
           & 0.52074e-7         &0.67319e-29 &   0.18803e-116 \\  
Method      $(\ref{eqn:38})$    & 0.24363e-7 & 0.14724e-30 &0.19642e-123\\           
\hline
  \end{tabular}
  \label{tab:abbr}
\end{table}


\begin{table}[htb]
 \caption{Errors Occurring in the estimates of the root of function $f_4(x)$ by the methods described below with initial guess $x_0=0.3$.}
  \begin{tabular}{llll} \hline
Methods                         &$\left|x_1-\alpha \right|$  &$\left|x_2-\alpha \right|$ & $\left|x_3-\alpha \right|$ \\ \hline 
Newton Method                   & 0.47567e-1 & 0.22849e-2 &0.55356e-5\\ 
Weerakoon   $(\ref{eqn:211})$   & 0.13039e-1 & 0.31800e-5 &0.45048e-16\\ 
Homeier     $(\ref{eqn:212})$   & 0.64393e-3 & 0.72236e-10 &0.10226e-30 \\ 
Chun Method $(\ref{eqn:213})$   & 0.34012e-1 & 0.11125e-3 &0.34855e-11 \\   
Method (17) of \cite{Soleymani}   
            & 0.14507e-1        &0.20259e-6  & 0.81662e-26 \\  
Method of \cite{Khattri}   
           & 0.56586e-1         &0.78841e-4&   0.41658e-15 \\  
Method      $(\ref{eqn:38})$    & 0.11886e-1 & 0.73037e-7 &0.10950e-27\\           
\hline
  \end{tabular}
  \label{tab:abbr}
\end{table}

\section{Conclusion}
In this paper we gave a class of the third-order iterative method which includes the methods of Weerakoon \cite{Weerakoon}, Homeier \cite{Homeier2} and  Chun \cite{Chun} as  particular cases. We also constructed a class of the optimal fourth-order method. This method gives different fourth-order methods by slight changing in first-step and using a single weight function in second step of Weerakoon \cite{Weerakoon} , Homeier \cite{Homeier2} and  Chun \cite{Chun} method  rather than using two different weight functions in  \cite{Soleymani}. A number of examples are given to illustrate the performance of our method by comparing with new existing third and fourth-order methods.

\newpage

\textsc{Jai Prakash Jaiswal\\
Department of Mathematics, \\
Maulana Azad National Institute of Technology,\\
Bhopal, M.P., India-462051}.\\
E-mail: {asstprofjpmanit@gmail.com; jaiprakashjaiswal@manit.ac.in}.\\\\
\end{document}